\DeclareMathAlphabet{\mathpzc}{OT1}{pzc}{m}{it}
\theoremstyle{plain}
\newtheorem{theorem}{Theorem}[section]
\newtheorem*{theorem*}{Theorem}
\newtheorem{lemma}[theorem]{Lemma}
\newtheorem*{claim*}{Claim}
\newtheorem{proposition}[theorem]{Proposition}
\newtheorem{corollary}[theorem]{Corollary}
\theoremstyle{definition}
\newtheorem{remark}[theorem]{Remark}
\newcommand{\can}{\overline{\phantom{x}}}
\newcommand{\ignore}[1]{}
\begin{document}
\setlength{\parindent}{0pt}

\keywords{Colour algebras, flexible quadratic algebras, composition algebras, cross products.}

\subjclass[2000]{Primary: 17A35; Secondary: 17A99}

\author{S. Pumpluen}
\email{susanne.pumpluen@nottingham.ac.uk}
\address{School of Mathematical Sciences\\
University of Nottingham\\
University Park\\
Nottingham NG7 2RD\\
United Kingdom
}

\keywords{Colour algebra, flexible quadratic algebra, composition algebra, noncommutative Jordan algebra.}

\subjclass[2000]{Primary: 17A45; Secondary: 17A20, 17A75, 11E25}

\title{Colour algebras over rings}

\begin{abstract}  
Colour algebras over fields of odd characteristic are well-known noncommutative Jordan algebras.
We define colour algebras more generally over a unital commutative associative ring with $\frac{1}{2}\in R$, and show that colour algebras can be constructed canonically by employing nondegenerate ternary hermitian forms with trivial determinant. We investigate their structure, automorphism group and derivations.
As over fields, colour algebras over $R$ are closely related to octonion algebras over $R$.
\end{abstract}

\maketitle

\section*{Introduction}

Let $F$ be a field of characteristic not 2.
The \emph{split colour algebra} ${\rm Col}(F)$ is defined as the seven-dimensional algebra with basis $1,u_i,v_i$ for $1\leq i\leq 3$ such that $u_iu_j=\varepsilon_{i,j,k}v_k$, $v_iv_j=\varepsilon_{i,j,k}u_k$, and $u_iv_j=v_iu_j=\delta_{ij}1$, where $\varepsilon_{i,j,k}$ is the totally skew-symmetric tensor with $\varepsilon_{1,2,3}=1$. In the original definition, $F=\mathbb{C}$  and ${\rm Col}(\mathbb{C})$ is employed to describe the colour symmetries of the quark model \cite{DKD}.
A \emph{colour algebra} $A$ over $F$ is a form of the split colour algebra ${\rm Col}(F)$. Colour algebras over fields  were investigated  in \cite{E1, EM, E, Sch1, Sch2}.

Let $R$ be a unital commutative associative ring where $2\in R^\times$ is invertible.  A unital algebra $A$ over $R$ which is  finitely generated projective  of constant rank as an $R$-module and has full support, is called a \emph{colour algebra} if $A(P)=A\otimes k(P)$ is a colour algebra over the field $k(P)$ for all $P \in {\rm Spec}(R)$.
Let $T$ be a projective $R$-module of constant rank 3 such that $\bigwedge^3 (T)\cong R $ via some isomorphism $\alpha$.
The split colour algebras ${\rm Col}( T,\alpha)$ over $R$ with underlying $R$-module structure $R\oplus T\oplus \check{T}$ generalize the ``classical'' split colour algebra ${\rm Col}(R)$ which is defined on a free $R$-module, and are closely related to Zorn algebras over rings. They
  form an important first class of colour algebras, and are introduced in Section \ref{sec:split} after the basic definitions are collected in Section \ref{sec:1}. The construction of ${\rm Col}( T,\alpha)$ is functorial in the parameters involved (Proposition \ref{prop:diagiso}).
 In Section \ref{sec:hermitian}, we construct colour algebras employing nondegenerate ternary hermitian forms with trivial determinant and in Section \ref{sec:iso} we generalise results on isomorphisms, automorphisms and derivations from \cite{EM} to colour algebras over rings.

For all $l,m\in \mathbb{Z}$, we then use split colour algebras over the $n$-dimensional projective space $\mathbb{P}_R^n$ over $R$ in Section \ref{sec:Jordan} to construct a noncommutative Jordan subalgebra of the split colour algebra $ {\rm Col}(R[t_0,\dots,t_n])$ that is  a free $R$-module of rank $1+\binom{l+n}{n}+\binom{m+n}{n}+\binom{(l+m)+n}{n}$.
Here,
 $R[t_0,\dots,t_n]$ is the polynomial ring in $n+1$ variables over $R$. When $R$ is a field, these noncommutative Jordan algebras have highly degenerate norms and therefore large radicals, analogously as discussed in a similar construction but employing Zorn algebras in \cite[3.8]{P2}.

Colour algebras over fields are unital central simple algebras, and appear as one of the two non-trivial cases in the classification of finite-dimensional central simple noncommutative Jordan algebras over a field $F$ of odd characteristic \cite{Sch3}:  Every colour algebra is flexible and quadratic and therefore a noncommutative Jordan algebra.

In physics, the colour symmetry of the Gell-Man quark model can be described as the multiplication of a colour algebra. For each quark, there is an antiquark which has the opposite properties of the quark. Quarks and antiquarks are used to form particles called hadrons. Each quark comes in three varieties and colour was used to describe the interactions of quarks. For an accessible explanation of the construction of the resulting colour algebra, see \cite{W}.

It is well-known that the structure of octonion algebras over rings has a much richer flavour than the theory of octonion algebras over fields.
Because of their intricate connections with colour algebras, this is reflected in the structure of colour algebras over rings, and in the vector products associated with both algebras.

\section{Preliminaries}\label{sec:1}

Let $R$ be a unital commutative associative ring and $2$ be an invertible element in $R$.
  For $P \in {\rm Spec}(R)$ let $R_P$ be the localization of $R$ at $P$  and $m_P$ the maximal ideal of $R_P$. We denote the corresponding residue class field by $k(P)=R_P/m_P$. For an $R$-module $M$ the localization of
$M$ at $P$ is denoted by $M_P$. An $R$-module $M$  has {\it full support}  if $M_P\not=0$ for all $P\in {\rm Spec}(R)$.

 All nonassociative $R$-algebras $A$ considered in this paper are finitely generated projective of constant rank as an $R$-module and have full support.

 A unital algebra $A$ over $R$ is called a \emph{colour algebra} if $A(P)=A\otimes k(P)$ is a colour algebra over the field $k(P)$ for all $P \in {\rm Spec}(R)$.

 Any anti-automorphism $\sigma:A\rightarrow A$ of period two is called an {\it involution} on $A$.
An involution $\sigma$ is called {\it scalar} if  $\sigma(x)x\in R1$ for all $x\in A$.
For every scalar involution $\sigma$,  the {\it norm} $n_A:A\rightarrow R$, $n_A(x)=\sigma(x)x$ (resp. the {\it trace} $t_A:A\rightarrow R$, $t_A(x)=\sigma(x)+x$) is a quadratic (resp. an $R$-linear) form.

 A unital algebra $A$ is called {\it quadratic}, if there exists a quadratic
form $n \colon A \rightarrow R$ such that $n(1_A) = 1$ and $x^2 - n(1_A, x)x + n(x)
1_A = 0$ for all $x \in A$, where $n(x,y)$ denotes the
induced symmetric bilinear form  $n(x, y) = n (x+y) - n(x) -n(y)$. The form $n$ is uniquely determined
and called the {\it norm} of the quadratic algebra $A$ \cite{P2}.  If an algebra $A$ has a scalar involution then  $A$ is a quadratic algebra \cite{M1}.

If $A$ is a quadratic algebra over $R$ then $A=R1\oplus A_0$ with $A_0=\{ u\in A\,|\, t(u)=0\}$. Define $\times: A_0\times A_0\rightarrow A_0$,
$u\times v=pr(uv)$ with $pr:A \rightarrow A_0$ the canonical projection map. Then $(A_0,\times)$ is an anti-commutative algebra over $R$ with $uv=-\frac{1}{2}n(u,v)+u\times v$.
The algebra $(A_0,\times)$ is called the \emph{vector algebra} of $A$.

A unital algebra $C$ over $R$ is  a
{\it composition algebra} if there exists a quadratic form $n \colon C \rightarrow R$
whose induced symmetric bilinear form $n(x, y)=n(x+y)-n(x)-n(y)$ is  nondegenerate, i.e., $n$ determines an $R$-module isomorphism $C
\stackrel{\sim}{\longrightarrow} \check{C} = {\rm Hom}_R (C, R)$, and  satisfies
 $n(xy) = n(x) n(y)$ for all $x, y \in C$.
 Composition algebras are quadratic algebras. A nondegenerate quadratic form $n$ on  $C$
 which satisfies $n(xy)=n(x)n(y)$ for all $x,y \in C$ is its norm as a quadratic algebra and thus uniquely determined up to isometry \cite{P2}. It
is called the {\it norm} of  $C$.  Composition algebras only
exist in ranks 1, 2, 4 or 8.  Composition algebras of rank~2 are exactly the
quadratic \'etale algebras over $R$ (these are sometimes called \emph{tori} in the literature).
The composition algebras of rank four are called \emph{quaternion algebras}, they are  Azumaya algebras of rank four.
Composition algebras of rank eight are called {\it octonion algebras}.  A composition algebra $C$ over $R$ is called \emph{split} if $C$ contains an isomorphic copy of the \emph{split torus} $R\times R$ with isotropic norm $n((a,b))=ab$ as a composition subalgebra.

 Every composition algebra $C$ has a {\it canonical involution} $\can$ given by
$\overline{x} = t(x)1_C - x$, where $t \colon C \rightarrow R$, $t(x) = n (1_C, x)$,
is the {\it trace} of $C$. This involution is scalar.  We know that $n(x)=x\bar x$ for all $x\in C$.

 When $2\in R^\times$, then all the octonion algebras over $R$ with a quadratic \'etale subalgebra can be constructed employing a hermitian space of rank three with trivial determinant \cite{T}.

Let $\times:R^3\times R^3\rightarrow R^3$ denote the classical vector product on the three-dimensional column space $R^3$. The algebra
\[
{\rm Zor}(R)=
\left [\begin {array}{cc}
 R & R^3\\
R^3  &  R \\
\end {array}\right ],
\]
\[
\left [\begin {array}{cc}
 a & u\\
u'  &  a' \\
\end {array}\right ]
\left [\begin {array}{cc}
 b & v\\
v'  &  b' \\
\end {array}\right ]
=
\left [\begin {array}{cc}
 ab+ ^tuv' & av+b'u-u'\times v'\\
bu' +a'v'+u\times v & ^tu'v+ a'b' \\
\end {array}\right ],
\]
is a split octonion algebra over $R$ with norm
\[
{\rm det}
\left [\begin {array}{cc}
 a & u\\
u'  &  a' \\
\end {array}\right ]=
aa'-^tuu'
\]
and is called \emph{Zorn's algebra of vector matrices}.  If $R$ is a field or more generally, a principal ideal domain or a Dedekind domain, then ${\rm Zor}(R)$ is, up to isomorphism, the only split octonion algebra over $R$.

\section{Colour algebras over rings}\label{sec:2}

\subsection{Split colour algebras over rings}\label{sec:split}
Let $T$ be a projective $R$-module of constant rank 3 such that ${\rm det}\,T=\bigwedge^3 (T)\cong R $.
Let $\langle \,,\,\rangle:T\times\check{T}\rightarrow R,$ $ \langle u,\check{v}\rangle=\check{v}(u)$
be the canonical pairing between $T$ and its  dual module $\check{T}={\rm Hom}_R(T,R)$.
Every isomorphism $\alpha:\bigwedge^3 (T)\rightarrow R $ induces a bilinear map
$\times:T\times T\rightarrow\check{T}$ via
$$(u,v)\mapsto u\times v=\alpha(u \wedge v \wedge -).$$
Such a map is called a {\it vector product} on $T$, since locally it is the ordinary vector product. Moreover,
$\alpha$ also determines an isomorphism $\beta:{\rm det}\check{T}\rightarrow  R$
which satisfies
$$\alpha(u_1 \wedge u_2 \wedge u_3)\beta(\check{u}_1 \wedge \check{u}_2 \wedge \check{u}_3)={\rm det}(\langle u_i,\check{u}_j\rangle)$$
for all $u_i\in T$, $\check{u}_j\in\check{T}$, $1\leq i,j\leq 3$. Therefore we analogously obtain a vector product
$\check{T}\times\check{T} \longrightarrow T$
$$(\check{u},\check{v})\mapsto \check{u}\times \check{v}=\beta(\check{u} \wedge \check{v} \wedge -)$$
 on $\check{T}$, employing $\beta$ instead of $\alpha$.

 Consider the finitely generated projective $R$-module of constant rank seven
 $${\rm Col}(T,\alpha)= R\left [\begin {array}{cc}
 1 & 0\\
0  &  1 \\
\end {array}\right ]\oplus T\oplus\check{T}= \{ \left [\begin {array}{cc}
 a & 0\\
0  &  a \\
\end {array}\right ]\,|\, a\in R \} \oplus
\{ \left [\begin {array}{cc}
 0 & u\\
\check{u}  &  0 \\
\end {array}\right ] \,|\,  u\in T, \check{u}\in \check{T}
\}$$
 $$= \{ \left [\begin {array}{cc}
 a & u\\
\check{u}  &  a \\
\end {array}\right ] \,|\, a\in R, u\in T, \check{u}\in \check{T}
\}$$
together with the multiplication given by
$$
\left [\begin {array}{cc}
 a & u\\
\check{u}  &  a \\
\end {array}\right ]
\left [\begin {array}{cc}
 b & v\\
\check{v}  &  b \\
\end {array}\right ]
=
\left [\begin {array}{cc}
  ab+\langle u,\check{v}\rangle + \langle v,\check{u}\rangle & av+bu-\check{u}\times \check{v}\\
b\check{u} +a\check{v}+u\times v &  ab+\langle u,\check{v}\rangle + \langle v,\check{u}\rangle \\
\end {array}\right ].
$$
The algebra ${\rm Col}(T,\alpha)$ is a colour algebra and is called  a \emph{split colour algebra} over $R$.
A split colour algebra is a quadratic algebra over $R$ with norm and trace
\[
n(
\left [\begin {array}{cc}
 a & u\\
\check{u}  &  a \\
\end {array}\right ])=
a^2-\langle u,\check{u}\rangle, \quad
{\rm tr}(
\left [\begin {array}{cc}
 a & u\\
\check{u}  &  a \\
\end {array}\right ])=
2a.
\]
The split colour algebra ${\rm Col}(T,\alpha)$ is flexible: $[x,y,x]=0$ for all $x,y\in {\rm Col}(R\times R,T,\alpha)$, and thus ${\rm Col}(T,\alpha)$ is power-associative. It is  a noncommutative Jordan algebra over $R$ as it satisfies the \emph{Jordan identity} $(x,y,x^2)=0$ for all $x,y\in {\rm Col}(T,\alpha)$.

Projecting the multiplication of ${\rm Col}(T,\alpha)$ onto the submodule $T\oplus \check{T} $ yields the 6-dimensional \emph{split vector colour algebra} $W(T,\alpha)=W(R\times R,T\oplus \check{T},h,\alpha)=T\oplus \check{T}$ over $R$
 with multiplication
 $$(u,\check{u})( v,\check{v})=(-\check{u}\times \check{v}, u\times v).$$
 The multiplication of $W(T,\alpha)$ is anti-commutative and satisfies the identity
 $$(((u,\check{u}) ( v,\check{v})) ( v,\check{v}))( v,\check{v}))=
 n(( v,\check{v}))(u,\check{u}) ( v,\check{v})$$
 with $n(( v,\check{v}))= -\langle v,\check{v}\rangle,$
 analogously as in \cite{E1} (and adjusted by the factor $\frac{1}{2}$ because of our different definition of the norm).

Split colour algebras are closely connected to  split octonion algebras, as  the split vector colour algebra $W(T,\alpha)$ of the split colour algebra ${\rm Col}(T,\alpha)$ is the same as the algebra we obtain when projecting the multiplication of a the Zorn algebra ${\rm Zor}(T,\alpha)$ to $T\oplus \check{T}$. Here,
\[
{\rm Zor}(T,\alpha)=
\left [\begin {array}{cc}
 R & T\\
\check{T}  &  R \\
\end {array}\right ]
\]
with the multiplication
\[
\left [\begin {array}{cc}
 a & u\\
\check{u}  &  a' \\
\end {array}\right ]
\left [\begin {array}{cc}
 b & v\\
\check{v}  &  b' \\
\end {array}\right ]
=
\left [\begin {array}{cc}
 ab+\langle u,\check{v}\rangle & av+b'u-\check{u}\times \check{v}\\
b\check{u} +a'\check{v}+u\times v & \langle v,\check{u}\rangle+ a'b' \\
\end {array}\right ]
\]
is a split octonion algebra over $R$  with norm
\[
{\rm det}
\left [\begin {array}{cc}
 a & u\\
\check{u}  &  a' \\
\end {array}\right ]=
aa'-\langle u,\check{u}\rangle .
\]
The octonion algebra ${\rm Zor}(T,\alpha)$ is  called a {\it Zorn algebra}. Every split octonion algebra over $R$ is isomorphic to such a Zorn algebra. Locally, ${\rm Zor}(T,\alpha)$ looks like ${\rm Zor}(R)$
\cite[3.3, 3.4, 3.5]{P2}.

It is now straightforward to see that every split colour algebra ${\rm Col}(T,\alpha)$ can be constructed from the vector part $(T,\alpha)$ of a Zorn algebra ${\rm Zor}(T,\alpha)$ over $R$.

When $T=R^3$ and $\times_\alpha=\times:R^3\times R^3\rightarrow R^3$ is the classical vector product on $R^3$, then the split colour algebra  over $R$  can be written as
\[
{\rm Col}(R)=
\left [\begin {array}{cc}
 R & R^3\\
R^3  &  R \\
\end {array}\right ],
\]
\[
\left [\begin {array}{cc}
 a & u\\
u'  &  a \\
\end {array}\right ]
\left [\begin {array}{cc}
 b & v\\
v'  &  b \\
\end {array}\right ]
=
\left [\begin {array}{cc}
 ab+ ^tuv'+^tu'v & av+bu-u'\times v'\\
bu' +av'+u\times v & ab+ ^tuv'+^tu'v \\
\end {array}\right ],
\]
with norm
\[
{\rm det}
\left [\begin {array}{cc}
 a & u\\
u'  &  a \\
\end {array}\right ]=
a^2-^tu u'.
\]
Again, the split colour algebra ${\rm Col}(R)$ can be constructed from the vector part of ${\rm Zor}(R)$. Locally, ${\rm Col}(T,\alpha)$ looks like ${\rm Col}(R)$.

Suppose that $T,T'$ are two  projective $R$-modules of constant rank 3 such that ${\rm det}\,T\cong {\rm det}\,T'\cong R $, and such that
 $\alpha:\bigwedge^3 (T)\rightarrow R $, $\alpha':\bigwedge^3 (T')\rightarrow R $ are two isomorphisms.
An $R$-linear map $\varphi: T\rightarrow T' $ such that $\alpha'\circ \det (\varphi)=\alpha$ is called a \emph{morphism} between $(T,\alpha)$ and $(T',\alpha') $ and denoted by $ (T,\alpha)\rightarrow (T',\alpha') $.
   If $\varphi: (T,\alpha)\rightarrow (T',\alpha') $
 is such a morphism then $\varphi$ is bijective, and induces ``diagonal'' isomorphisms between the corresponding split colour algebras:

 \begin{proposition}\label{prop:diagiso}
 If  $ \varphi:(T,\alpha)\rightarrow (T',\alpha') $
 is a morphism,  then
 $${\rm Col}( T,\alpha)\cong {\rm Col}( T',\alpha')$$
$$
\left [\begin {array}{cc}
 a & u\\
\check{u}  &  a \\
\end {array}\right ] \mapsto
\left [\begin {array}{cc}
 a & \varphi(u)\\
\check{ \varphi}^{-1}(\check{u})  &  a' \\
\end {array}\right ],
$$
and
$${\rm Col}( T,\alpha)\cong {\rm Col}(\check{T},\beta),$$
 $$
\left [\begin {array}{cc}
 a & u\\
\check{u}  &  a \\
\end {array}\right ] \mapsto
\left [\begin {array}{cc}
 a & u\\
\check{u}  &  a \\
\end {array}\right ]^{*t}
=
\left [\begin {array}{cc}
 a & - \check{u}\\
-u &  a \\
\end {array}\right ]^*.
$$
are isomorphic split colour algebras.
\end{proposition}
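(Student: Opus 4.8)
The plan is to check that each of the two displayed maps is an algebra isomorphism, and since each is visibly an $R$-linear bijection — the first because $\varphi$, hence $\check\varphi$ and $\check\varphi^{-1}$, is bijective; the second because it is the canonical double-dual identification $\check{\check T}\cong T$ composed with a sign change and a swap of the two off-diagonal slots — the only real content is multiplicativity.

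For the first map I would begin by recording two compatibility identities for $\varphi$. The first, $\langle \varphi(u),\check\varphi^{-1}(\check v)\rangle=\langle u,\check v\rangle$, is immediate from the definition of $\check\varphi$ as the transpose of $\varphi$: one has $\langle\varphi(u),\psi\rangle=\langle u,\check\varphi(\psi)\rangle$, and taking $\psi=\check\varphi^{-1}(\check v)$ gives the claim. The second is the intertwining of vector products $\varphi(u)\times_{\alpha'}\varphi(v)=\check\varphi^{-1}(u\times_\alpha v)$, which I would obtain by evaluating both sides on an arbitrary $\varphi(w)\in T'$: the left side yields $\alpha'(\varphi(u)\wedge\varphi(v)\wedge\varphi(w))$, the right side yields $\langle w,u\times_\alpha v\rangle=\alpha(u\wedge v\wedge w)$, and these agree by the morphism condition $\alpha'\circ\det(\varphi)=\alpha$; since $\varphi$ is surjective this pins down the two covectors. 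Dualizing, the analogous identity $\check\varphi^{-1}(\check u)\times_{\beta'}\check\varphi^{-1}(\check v)=\varphi(\check u\times_\beta \check v)$, equivalently $\beta'\circ\det(\check\varphi^{-1})=\beta$, follows from the defining relation linking $\alpha$ and $\beta$ together with the first identity, after cancelling the factor $\alpha(u_1\wedge u_2\wedge u_3)$. Substituting these three identities into the multiplication formula, and using that the map fixes the scalar $a$, the scalar (and hence pairing) entry matches by the first identity, the top-right $T'$-entry by the dual-product identity, and the bottom-left $\check T'$-entry by the vector-product identity, which gives multiplicativity.

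For the second map I would first make explicit the multiplication of ${\rm Col}(\check T,\beta)$: its top-right slot lies in $\check T$, its bottom-left slot in $\check{\check T}\cong T$, its main vector product is $\times_\beta\colon\check T\times\check T\to\check{\check T}\cong T$, and the induced dual product on $\check{\check T}\cong T$ is exactly $\times_\alpha$, since the isomorphism $\bar\beta\colon\bigwedge^3\check{\check T}\to R$ induced by $\beta$ corresponds to $\alpha$ under the double-dual identification (again via the $\alpha$–$\beta$ relation). With this dictionary in hand, applying the swap-and-negate map $(u,\check u)\mapsto(-\check u,-u)$ and expanding both $\Phi(xy)$ and $\Phi(x)\Phi(y)$ entry by entry, the scalar entries agree by the canonical pairing identification and the two off-diagonal entries agree by the two vector products, with all signs cancelling.

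The main obstacle is the vector-product bookkeeping: keeping straight which of $\times_\alpha,\times_\beta,\times_{\alpha'},\times_{\beta'}$ lands in which module, and in particular the cancellation of $\alpha$ needed for the dual identity $\beta'\circ\det(\check\varphi^{-1})=\beta$ and for $\bar\beta\cong\alpha$. This cancellation is not quite formal over a general $R$, as it uses that the top exterior power is generated by decomposable elements $u_1\wedge u_2\wedge u_3$; I would dispatch it by localizing, where $T_P$ is free of rank three and the statement reduces to the classical free computation over $R_P$, matching the field-theoretic calculations of \cite{EM}. Everything else — linearity, bijectivity, and the entrywise matching — is formal and checkable directly.
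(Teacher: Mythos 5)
Your proof is correct, but note that the paper itself gives essentially no argument for this proposition: it is stated bare, and the text immediately following it observes that the same two maps also yield the isomorphisms ${\rm Zor}(T,\alpha)\cong{\rm Zor}(T',\alpha')$ and ${\rm Zor}(T,\alpha)\cong{\rm Zor}(\check T,\beta)$, citing \cite[3.4]{P2}. The intended justification is thus an appeal to the octonionic case: the multiplication of ${\rm Col}(T,\alpha)$ is assembled from exactly the same ingredients as that of ${\rm Zor}(T,\alpha)$ --- the canonical pairing $\langle\,,\,\rangle$ and the two cross products $\times_\alpha$ and $\times_\beta$ --- so the identities verified for Zorn algebras in \cite[3.4]{P2} transfer verbatim to the colour multiplication, which differs only in how the scalar parts are combined. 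What you do differently is to prove those identities from scratch, and your list is exactly the right one: the pairing compatibility $\langle\varphi(u),\check\varphi^{-1}(\check v)\rangle=\langle u,\check v\rangle$, the intertwining $\varphi(u)\times_{\alpha'}\varphi(v)=\check\varphi^{-1}(u\times_\alpha v)$, and its dual $\check\varphi^{-1}(\check u)\times_{\beta'}\check\varphi^{-1}(\check v)=\varphi(\check u\times_\beta\check v)$, together with $\bar\beta=\alpha$ under double duality for the second map; multiplicativity of both maps reduces to these, entry by entry, with the signs in $(u,\check u)\mapsto(-\check u,-u)$ cancelling as you say. Your treatment of the one delicate point --- that $\beta'\circ\det(\check\varphi^{-1})=\beta$ cannot be obtained by naively ``cancelling'' the scalar $\alpha(u_1\wedge u_2\wedge u_3)$, which need not be a unit --- is also sound: localization reduces everything to the free case. (Alternatively, since decomposables span $\det T$ and $\alpha$ is surjective, finitely many values $\alpha(x_i)$ on decomposables generate the unit ideal, so the cancellation is in fact formal; but your route is standard and fine.) In short, your argument buys a self-contained verification that fills in a proof the paper delegates to the literature, at the cost of redoing computations that \cite[3.4]{P2} already contains.
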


Hence the construction of ${\rm Col}( T,\alpha)$ is functorial in the parameters involved, analogously as the one of ${\rm Zor}(T,\alpha)$:  The two maps defined in Proposition \ref{prop:diagiso} also yield the  isomorphisms
${\rm Zor}(T,\alpha)\cong {\rm Zor}(T',\alpha')$ and $ {\rm Zor}(T,\alpha)\cong {\rm Zor}(\check{T},\beta)$
 cf. \cite[3.4]{P2}.

In other words, $(T,\alpha)$  determines the  algebras ${\rm Zor}(T,\alpha)$ and ${\rm Col}(T,\alpha)$ up to isomorphism, as well as
 the associated 6-dimensional split vector colour algebra $W(T,\alpha)$.

\begin{corollary}
If there is an $R$-linear map $\varphi:T\to T$  such that ${\rm det}(\varphi)=id$
 then
\[
\left [\begin {array}{cc}
 a & u\\
\check{u}  &  a \\
\end {array}\right ]\mapsto
\left [\begin {array}{cc}
 a & \varphi(u)\\
\check{\varphi}^{-1}(\check{u})  &  a \\
\end {array}\right ]
\]
lies in ${\rm Aut}_R({\rm Col}( T,\alpha))$.
 In particular, if there is $\mu\in R^\times$ such that $\mu^3=1$ then
\[
\left [\begin {array}{cc}
 a & u\\
\check{u}  &  a \\
\end {array}\right ]\to
\left [\begin {array}{cc}
 a & \mu u\\
\mu^{ -1}(\check{u})  &  a \\
\end {array}\right ] .
\]
lies in ${\rm Aut}_R({\rm Col}( T,\alpha))$,
i.e. if $R$ contains a primitive third root of unity then there exist two nontrivial automorphisms induced by  the two primitive third roots of unity in $R$.
\end{corollary}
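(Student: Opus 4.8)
The plan is to realise the corollary as a special case of Proposition \ref{prop:diagiso}, taking $(T',\alpha')=(T,\alpha)$. The key observation is that the defining condition of a morphism $(T,\alpha)\to(T',\alpha')$, namely $\alpha'\circ\det(\varphi)=\alpha$, reduces when $T'=T$ and $\alpha'=\alpha$ to the single requirement $\alpha\circ\det(\varphi)=\alpha$. Since $\alpha\colon\bigwedge^3(T)\to R$ is an isomorphism, this is equivalent to $\det(\varphi)=\mathrm{id}$ as an endomorphism of $\bigwedge^3(T)$. Thus any $R$-linear $\varphi\colon T\to T$ with $\det(\varphi)=\mathrm{id}$ is precisely a self-morphism $\varphi\colon(T,\alpha)\to(T,\alpha)$.

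First I would invoke the first isomorphism of Proposition \ref{prop:diagiso} for this self-morphism. It yields an $R$-algebra isomorphism from $\mathrm{Col}(T,\alpha)$ to itself, described by the diagonal formula appearing in the corollary; being an isomorphism of the algebra onto itself, it is an automorphism and hence lies in $\mathrm{Aut}_R(\mathrm{Col}(T,\alpha))$. This settles the first assertion with no further computation, since multiplicativity was already verified in the proposition.

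For the ``in particular'' clause I would specialise to the scalar endomorphism $\varphi=\mu\cdot\mathrm{id}_T$ with $\mu\in R^\times$ satisfying $\mu^3=1$. By multilinearity of the top exterior power one has $\det(\varphi)=\bigwedge^3(\mu\cdot\mathrm{id}_T)=\mu^3\cdot\mathrm{id}=\mathrm{id}$, so $\varphi$ qualifies under the first part. The dual map is $\check\varphi=\mu\cdot\mathrm{id}_{\check T}$, whence $\check\varphi^{-1}=\mu^{-1}\cdot\mathrm{id}_{\check T}$; substituting into the diagonal formula reproduces exactly the map $u\mapsto\mu u$, $\check u\mapsto\mu^{-1}\check u$. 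Finally, if $R$ contains a primitive third root of unity $\omega$, then $\omega$ and $\omega^2$ are its two primitive third roots, both satisfying $\mu^3=1$; as $\omega\neq\omega^2$ (otherwise $\omega=1$), they furnish two distinct automorphisms, each nontrivial since $\mu\neq1$.

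I do not anticipate a serious obstacle: the whole argument is the translation of the determinant condition into the morphism language of Proposition \ref{prop:diagiso}, followed by the elementary scalar computation. The only point requiring a moment of care is the identity $\check\varphi^{-1}=\mu^{-1}\cdot\mathrm{id}$, which rests on the dual of scalar multiplication being scalar multiplication by the same factor.
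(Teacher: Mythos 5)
Your proposal is correct and is exactly the intended argument: the paper states this as a corollary of Proposition \ref{prop:diagiso} with no separate proof, since taking $(T',\alpha')=(T,\alpha)$ turns the morphism condition $\alpha'\circ\det(\varphi)=\alpha$ into $\det(\varphi)=\mathrm{id}$ (using that $\alpha$ is an isomorphism), and the diagonal isomorphism of the proposition then becomes an automorphism. Your specialisation $\varphi=\mu\cdot\mathrm{id}_T$, the computation $\det(\varphi)=\mu^3\cdot\mathrm{id}=\mathrm{id}$, and the identification $\check\varphi^{-1}=\mu^{-1}\cdot\mathrm{id}_{\check T}$ all match what the paper leaves implicit.
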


\subsection{A construction of colour algebras employing hermitian forms}\label{sec:hermitian}

The construction of split colour algebras is part of a bigger picture; when $2\in R^\times$, every octonion algebra which contains a quadratic  \'etale  subalgebra
can be constructed employing a nondegenerate hermitian form of rank three with trivial determinant (Petersson and Racine \cite[3.8]{PR}, or Thakur \cite{T}). We now employ the elements of this construction for colour algebras.

 Let $S$ be a quadratic  \'etale $R$-algebra with canonical involution $\can$ and let $P$ be a finitely generated projective $R$-module of constant rank. A $\can$-hermitian form $h \colon P \times P \to S$ is a biadditive map
$h \colon P \times P \to D$ with $h(ws, w't) = \bar s h (w, w') t$ and
$h(w, w') = \overline{ h(w', w)}$ for all $s, t \in S$, $w, w' \in P$, and where
$P \to\; \overline{P}^{\vee}$, $w \mapsto h (w,\cdot)$ is an isomorphism of
$S$-modules.

Let $P$ have rank three and let $(P,h)$ be a nondegenerate $\can$-hermitian space such that $\bigwedge^3 (P,h)
\cong \langle 1\rangle $, where $\langle 1\rangle$ is the $\can$-hermitian form $h_0(a,b)=a\bar b$ on $S$. Then  $$n:P\times P\rightarrow R, \quad n(u,v)=h(u,v)+\overline{h(u,v)}$$
 is a nondegenerate symmmetric $R$-bilinear form.

 Choose an isomorphism $\alpha: \bigwedge^3 (P,h)\rightarrow \langle 1\rangle $.
Define a cross product $\times_{\alpha}:P\times P\rightarrow P$ via
$$h(u,v\times_\alpha w)=\alpha(u\wedge v \wedge w)$$
for all $u,v,w\in P$ as in \cite[p.~5122]{T} or \cite[3.8]{PR}. Note that this cross product
$\times_{\alpha}$ depends only on $T,P,h$ and $\alpha$.
The $R$-module
$C={\rm Col}(S,P,h,\alpha)=R\oplus P$ becomes a colour algebra  under the multiplication
$$(a,u)(b,v)=(ab-\frac{1}{2}(h(u,v)+\overline{h(u,v)}), va+u b+u\times_{\alpha}v)$$
for all $u,v\in P$ and $a,b\in R$.
The colour algebra ${\rm Col}(S,P,h,\alpha)$ is flexible and quadratic  with norm
$$n((a,u))=a^2+h(u,u),$$
and
$$n((a,u),(b,v))=n_S(a,b)+(h(u,v)+\overline{h(u,v)})=(a,u)\sigma(b,v)+(b,v)\sigma(a,u).$$

In  ${\rm Col}(S,P,h,\alpha)$ we have
 $$((u\times v)\times v)\times v=n(v) u\times v$$
for all $u,v\in P$ \cite[Lemma 2]{Pu2}. It is straightforward to check that
$$a(u\times_\alpha v)=(\bar a u)\times_\alpha v= u \times_\alpha \bar a v, $$
$$h(u,v\times_\alpha  w)=\overline{h(u \times_\alpha v,w)}=h(w,u\times_\alpha v),$$
$$h(u\times_\alpha  v,u)=h(u\times_\alpha  v,v)=0$$
for all $u,v,w\in P$, $a\in S$.

If the ternary hermitian space $(P,h)$ is orthogonally decomposable then the constructions ${\rm Col}(S,P,h,\alpha)$ is independent of the choice of $\alpha$.

 We obtain split colour algebras as the special case that $S=R\times R$ and $P\cong T\oplus \check{T}$ as $R$-module. In that case
a hermitian form $h$ on $P$ is induced
by the $R$-bilinear form $b:(T\oplus \check{T})\times (T\oplus \check{T})\rightarrow R$, $b((u,\check{u}), (v,\check{v}))=\langle u,\check{v}\rangle +\langle v,\check{u}\rangle$.

\ignore{ We note that $h(u,v)=\frac{1}{2}(h(u,v)+h(v,u))+\frac{1}{2}(h(v,u)-h(u,v))\in S$ with first term on r.h.s being in $R$, the second part being dropped completely in the multiplication of the colour algebra, first slot.}

Denote the noncommutative non-unital 6-dimensional vector colour algebra $(P,\times_\alpha)$ over $R$ that we obtain by projecting the multiplication of ${\rm Col}(S,P,h,\alpha)$ onto $P$ by $W(S,P,h,\alpha)$.

\begin{remark}
(i) We recall that the $R$-module
${\rm Cay}(S,P,h,\alpha)=S\oplus P$ becomes an octonion algebra under the multiplication
$$(a,u)(b,v)=(ab-h(v,u), va+u\bar b+u\times_{\alpha}v)$$
for $u,v\in P$ and $a,b\in S$, with norm
$$n((a,u))=n_S(a)+h(u,u).$$
 Its canonical involution $(a,u)\rightarrow (\overline{a}, -u)$  is a scalar involution. Its norm can also be written as $ n((a,u))=(a,u) \sigma(a,u)= \sigma(a,u)(a, u)$
 and its trace as $ t((a,u))= \sigma(a,u)+(a, u)=(t_{S}(a),0)$.
   \\ (ii) We have the following identities in the octonion algebra $A={\rm Cay}(S,P,h,\alpha)$ \cite{T, Pu2}:
$$h(u\times_\alpha v,u)+\overline{h(u\times_\alpha v,u)}=n_A(u\times_\alpha v,u)=0,$$
$$(u\times_\alpha v) \times_\alpha u=u\times ( v \times_\alpha u),$$
$$h(u, u\times_\alpha v)=0 \text{ and } u\times_\alpha (u\times_\alpha v)=-h(u,u)v+h(v,u)u,$$
$$h( u\times_\alpha v,v)=0  \text{ and }(u\times_\alpha v)\times_\alpha v=h(v,v)u-h(u,v)v$$
 for all $u,v\in P$. Observe that projecting the multiplication of ${\rm Cay}(S,P,h,\alpha)$ onto $P$, we also obtain the algebra $W(S,P,h,\alpha)=(P,\times_\alpha)$.
\\ (iii)
There exist octonion algebras whose norm restricted to their trace zero elements form an indecomposable quadratic space of rank 7 \cite{KPS}, thus have only $R$ as a composition subalgebra, or octonion algebras. A prominent example is Coxeter's order over $\mathbb{Z}$ which does not have any composition subalgebra. This shows that contrary to the situation of base fields, not every octonion algebra over a ring can be used to construct a colour algebra in the usual way.
\end{remark}

\section{Isomorphisms, automorphisms and derivations}\label{sec:iso}

Let $S_i$ be two quadratic \'etale algebras over $R$ with nontrivial automorphism $\can_i$, and
let $(P,h)$ and $(P',h')$ be two nondegenerate $\can_i$-hermitian spaces over $S_i$, $i\in\{1,2\}$. Then  $(P,h)$ and $(P',h')$ are called \emph{isometric}, if there exists an $R$-isomorphism $s:S_1\rightarrow S_2$ and an $s$-semilinear map $\Psi: P\rightarrow P'$ such that
 $h'(\Psi(u),\Psi(v))=s(h(u,v))$ for all $u,v\in P$. We now generalize  \cite[Theorem 2.5]{EM}:

  \begin{proposition}\label{prop:2.5}
  Let $(P,h)$ and $(P',h')$ be two nondegenerate $\can_i$-hermitian spaces over $S_i$ of rank three with trivial determinant, $i\in \{1,2\}$.
  Let $n(v)=h(v,v)$ and $n'(v)=h'(v,v)$ be the associated symmetric bilinear forms on $P$ and $P'$.
  \\
 (i) If $(P,h)$ and $(P',h')$ are isometric  then  $(P,\times_\alpha)=W(S_1,P,h,\alpha)$ and $(P',\times_\alpha')=W(S_2,P',h',\alpha')$ are isomorphic vector colour algebras.
 \\ (ii) If $f:W(S_1,P,h,\alpha)\rightarrow W(S_2,P',h',\alpha')$ is an $F$-algebra isomorphism then $n$ and $n'$ are isometric quadratic forms over $R$.
  \end{proposition}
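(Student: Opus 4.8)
The plan is to prove the two implications separately, using the defining multiplication of the vector colour algebra, namely $(u,\check u)(v,\check v)$ projected onto $P$, which is governed entirely by the cross product $\times_\alpha$ and hence by the hermitian form $h$. Throughout I will exploit the identity $h(u, v\times_\alpha w) = \alpha(u\wedge v\wedge w)$ defining the cross product, together with the relations collected in the excerpt, most importantly $h(u\times_\alpha v, w) = \overline{h(u, v\times_\alpha w)}$ and $u\times_\alpha(u\times_\alpha v) = -h(u,u)v + h(v,u)u$.

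For part (i), suppose $(P,h)$ and $(P',h')$ are isometric, so there is an $R$-algebra isomorphism $s\colon S_1\to S_2$ and an $s$-semilinear bijection $\Psi\colon P\to P'$ with $h'(\Psi u,\Psi v) = s(h(u,v))$. I would first show that $\Psi$ carries $\times_\alpha$ to a scalar multiple of $\times_{\alpha'}$, and then adjust. Concretely, the cross product on $P'$ pulled back along $\Psi$ is again a vector product associated to some isomorphism $\bigwedge^3(P',h')\cong\langle 1\rangle$; since $\Psi$ is an isometry it preserves $\bigwedge^3$ up to a unit, so $\Psi(u\times_\alpha v)$ and $\Psi(u)\times_{\alpha'}\Psi(v)$ differ by a fixed scalar $\lambda\in S_2^\times$ determined by comparing $\alpha$ and $\alpha'\circ\bigwedge^3\Psi$. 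Using the trivial-determinant hypothesis together with the normalisation $\bigwedge^3(P,h)\cong\langle 1\rangle$, one checks $\lambda$ is actually a cube root of $1$ (or can be absorbed by rescaling $\Psi$), so after this adjustment $\Psi$ becomes an isomorphism of vector colour algebras. The bookkeeping with the semilinearity of $\Psi$ and the conjugation $\can_i$ in the scalar rule $a(u\times_\alpha v) = (\bar a u)\times_\alpha v$ is the only delicate point here, but it is a direct verification.

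For part (ii), suppose $f\colon W(S_1,P,h,\alpha)\to W(S_2,P',h',\alpha')$ is an algebra isomorphism; I must recover the quadratic form $n(v) = h(v,v)$ from the multiplication alone and show $f$ is an isometry between $n$ and $n'$. The key is that $n$ is intrinsic to the vector colour algebra: from $u\times_\alpha(u\times_\alpha v) = -h(u,u)v + h(v,u)u$ one sees that the operator $v\mapsto u\times_\alpha(u\times_\alpha v)$, i.e. left multiplication by $u$ squared in $W$, has a trace-type invariant that returns $-2\,h(u,u) = -2\,n(u)$ after accounting for the rank and the term $h(v,u)u$. More precisely I would fix $u$ and consider $L_u^2$ on $P$, compute its trace or its action on a complement of $u$, and thereby express $n(u)$ as a polynomial invariant of the multiplication. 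Since $f$ preserves multiplication, it preserves this invariant, giving $n'(f(u)) = c\cdot n(u)$ for a global constant $c$; evaluating on an element with $n(u)\in R^\times$ (available by nondegeneracy after localising) forces $c=1$, so $n$ and $n'$ are isometric.

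The main obstacle I anticipate is the $c=1$ normalisation in part (ii) and, relatedly, pinning down $\lambda$ in part (i): an algebra isomorphism of the vector colour algebras a priori only determines $n$ and $n'$ up to a scalar, and one must use the trivial-determinant condition $\bigwedge^3(P,h)\cong\langle 1\rangle$ — which rigidifies the scaling of the cross product — to eliminate this ambiguity. I would handle this by passing to $\operatorname{Spec}(R)$ and checking the statement fibrewise over each $k(P)$, where the result reduces to \cite[Theorem 2.5]{EM}, and then invoke that an $R$-linear map which is an isometry on every fibre is an isometry over $R$; the functoriality of the construction (Proposition \ref{prop:diagiso}) guarantees the local pictures glue compatibly.
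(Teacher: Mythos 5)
Part (ii) of your proposal is essentially correct, and the trace idea, executed properly, is even cleaner than the paper's own argument: from $u\times_\alpha(u\times_\alpha v)=-h(u,u)v+h(v,u)u$, the $S$-antilinear summand $v\mapsto h(v,u)u$ has $R$-trace zero, so $\mathrm{tr}_R(L_u^2)=-6\,n(u)$; since an $R$-algebra isomorphism $f$ satisfies $L_{f(u)}=f\circ L_u\circ f^{-1}$, traces are preserved on the nose and $n'(f(u))=n(u)$ with no undetermined constant $c$ at all. Your worry about normalising $c$, and the fibrewise argument you invoke to settle it, are therefore unnecessary. (The paper instead applies $f$ to the identity $((u\times_\alpha v)\times_\alpha v)\times_\alpha v=n(v)\,u\times_\alpha v$, obtaining $(n(v)-n'(f(v)))\,(f(u)\times_{\alpha'}f(v))=0$ and cancelling; same spirit, different identity.)

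The genuine gaps are in part (i) and in your closing local-to-global reduction. You correctly notice that an isometry $\Psi$ need not respect the chosen trivialisations, so that $\Psi(u)\times_{\alpha'}\Psi(v)=\lambda\,\Psi(u\times_\alpha v)$ for some norm-one unit $\lambda\in S_2$; but your claim that $\lambda$ ``is a cube root of $1$ or can be absorbed by rescaling $\Psi$'' is unjustified. Rescaling $\Psi$ by a norm-one unit $\mu$ multiplies the discrepancy by $\mu^{\mp 3}$, so absorbing $\lambda$ requires $\lambda$ to be a cube in the norm-one torus of $S_2$, which fails in general; over a field one instead composes with an isometry of $(P,h)$ of determinant $\lambda^{-1}$, which exists because hermitian forms diagonalise there --- a fix unavailable over a general $R$. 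The paper sidesteps this entirely by hypothesis: its proof of (i) assumes from the outset that $f$ satisfies both $h'(f(u),f(v))=s(h(u,v))$ and $\alpha'(f(u)\wedge f(v)\wedge f(w))=s(\alpha(u\wedge v\wedge w))$, i.e.\ compatibility with $\alpha,\alpha'$ is built into the notion of isometry used, after which nondegeneracy of $h'$ immediately yields $f(v\times_\alpha w)=f(v)\times_{\alpha'}f(w)$. Finally, your reduction to fibres is not valid over rings: fibrewise vanishing of $n(u)-n'(f(u))$ only places that element in every prime ideal, hence makes it nilpotent (not zero unless $R$ is reduced), and fibrewise isomorphisms of algebras cannot be glued to a global one --- Proposition \ref{prop:diagiso} is functoriality for morphisms of pairs $(T,\alpha)$, not a descent statement. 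This failure of local-to-global arguments is precisely what makes the theory over rings richer than over fields, so it cannot be waved through.
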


  \begin{proof}
  (i) Let  $s:S_1\rightarrow S_2$ be an $R$-algebra isomorphism and  $f:P\rightarrow P'$ be an $s$-semilinear map that satisfies
 $h'(f(u),f(v))=s(h(u,v))$ and $\alpha'(f(u),f(v),f(w))=\alpha(u,v,w)$ for all $u,v,w\in P$.
  By assumption, we obtain that
 $h'(f(u),f(v)\times_{\alpha'} f(w))=\alpha'(f(u)\wedge f(v) \wedge f(w)))=s(\alpha(u,v,w))=h(u,v\times_\alpha w)=h'(f(u),f(v) \times_\alpha f(w))$
 for all $u,v,w\in P$.  Since $h'$ is nondegenerate this implies that $f$ is an isomorphism between the vector algebras $(P,\times_\alpha)=W(S_1,P,h,\times_\alpha)$ and $(P',\alpha')=W(S_2,P',h',\alpha')$.
 \\ (ii) Suppose that $f:W(S_1,P,h,\alpha)\rightarrow W(S_2,P',h',\alpha')$ is an algebra isomorphism.
 We know that we have
 $((u\times v)\times v)\times v=n(v) u\times v$ in ${\rm Col}(S_1,P,h,\alpha)$ for all $u,v\in P$ \cite[Lemma 2]{Pu2}
 for all $v\in P$, i.e. $n$ and $n'$ are isometric quadratic forms.
  \end{proof}

Suppose now that $(P,h)$ and $(P',h')$ are two nondegenerate $\can$-hermitian spaces  of rank three  over a quadratic \'etale algebra $S$ with trivial determinant.
Choose two isomorphisms $\alpha: \bigwedge^3 (P,h)\rightarrow \langle 1\rangle $, $\alpha': \bigwedge^3 (P',h')\rightarrow \langle 1\rangle $.
Define two cross products
via $h(u,v\times w)=\alpha(u\wedge v \wedge w)$ and
$h'(u',v'\times w')=\alpha'(u'\wedge v' \wedge w').$

Let $\varphi: P\rightarrow P' $ be an $S$-linear map  such that $\alpha'\circ \det (\varphi)=\alpha$, i.e. $\varphi$ is a (bijective) morphism
 $ (P,\alpha)\rightarrow (P',\alpha') $.
   Assume additionally that $\varphi: (P,\alpha)\rightarrow (P',\alpha') $ is an isometry between $h$ and $h'$.

 \begin{proposition}
 Let $ \varphi:(P,\alpha)\rightarrow (P',\alpha') $
 be a morphism that is an isometry between $h$ and $h'$.
\\ (i)  $\varphi(u\times_{\alpha}v)=u\times_{\alpha'}v$.
 \\ (ii)  $W(S,P,h,\alpha)\cong W(S, P',h',\alpha')$
 via $\varphi$.
\\ (iii)  ${\rm Col}(S, P,h,\alpha)\cong {\rm Col}(S, P',h',\alpha')$
 via $G_\varphi((a,u))=(a,\varphi(u))$.
 \\ (iv)
 ${\rm Cay}(S,P,h,\alpha)\cong {\rm Cay}(S, P',h',\alpha')$
 via $H_\varphi((a,u))=(a,\varphi(u))$.
\end{proposition}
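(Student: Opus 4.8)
The plan is to prove part (i) first, since (ii)--(iv) are formal consequences of it combined with the isometry hypothesis and the $S$-linearity of $\varphi$. The heart of the matter is the compatibility of $\varphi$ with the two cross products, $\varphi(u\times_\alpha v)=\varphi(u)\times_{\alpha'}\varphi(v)$; everything else amounts to transporting the defining multiplication formulas through $\varphi$ term by term.

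For (i), I would test both candidate elements of $P'$ against the nondegenerate form $h'$. Fix $u,v\in P$ and let $w\in P$ be arbitrary. Using the defining relation for $\times_{\alpha'}$, the $S$-linearity of $\det(\varphi)$, and the morphism condition $\alpha'\circ\det(\varphi)=\alpha$, I compute
\[
h'(\varphi(w),\varphi(u)\times_{\alpha'}\varphi(v))=\alpha'(\varphi(w)\wedge\varphi(u)\wedge\varphi(v))=\big(\alpha'\circ\det(\varphi)\big)(w\wedge u\wedge v)=\alpha(w\wedge u\wedge v).
\]
On the other hand, the defining relation for $\times_\alpha$ gives $\alpha(w\wedge u\wedge v)=h(w,u\times_\alpha v)$, and the isometry hypothesis rewrites this as $h'(\varphi(w),\varphi(u\times_\alpha v))$. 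Comparing, I obtain $h'(\varphi(w),\varphi(u)\times_{\alpha'}\varphi(v))=h'(\varphi(w),\varphi(u\times_\alpha v))$ for every $w\in P$. Since $\varphi$ is bijective, $\varphi(w)$ ranges over all of $P'$, so nondegeneracy of $h'$ forces $\varphi(u)\times_{\alpha'}\varphi(v)=\varphi(u\times_\alpha v)$; if one prefers nondegeneracy phrased via $w'\mapsto h'(w',\cdot)$, apply the hermitian symmetry of $h'$ first.

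Part (ii) is then immediate: the vector colour algebra $W(S,P,h,\alpha)$ is by construction $(P,\times_\alpha)$, so (i) says precisely that the bijective $S$-linear map $\varphi$ is an algebra homomorphism $(P,\times_\alpha)\to(P',\times_{\alpha'})$, hence an isomorphism. For (iii) and (iv) I would substitute $G_\varphi$, respectively $H_\varphi$, into the relevant product formulas. Expanding $G_\varphi((a,u))\,G_\varphi((b,v))=(a,\varphi(u))(b,\varphi(v))$ via the colour-algebra multiplication, the scalar slot $ab-\tfrac12\big(h'(\varphi(u),\varphi(v))+\overline{h'(\varphi(u),\varphi(v))}\big)$ matches $ab-\tfrac12\big(h(u,v)+\overline{h(u,v)}\big)$ by the isometry, while the vector slot $\varphi(v)a+\varphi(u)b+\varphi(u)\times_{\alpha'}\varphi(v)$ equals $\varphi(va+ub+u\times_\alpha v)$ by $R$-linearity of $\varphi$ and part (i); thus $G_\varphi$ is a unital homomorphism, bijective because $\varphi$ is. The argument for $H_\varphi$ on ${\rm Cay}(S,P,h,\alpha)=S\oplus P$ is identical, except that now $a,b\in S$, so one uses the full $S$-linearity of $\varphi$ to move $a$ and $\bar b$ across $\varphi$ in the vector slot, together with $h'(\varphi(v),\varphi(u))=h(v,u)$ in the scalar slot.

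The only genuine obstacle is part (i), and within it the single delicate point is the bookkeeping of where $S$-linearity versus mere $R$-linearity is needed, and the observation that $\det(\varphi)$ is itself $S$-linear so that $\alpha'\circ\det(\varphi)$ is a well-defined $S$-valued pairing; once the defining relations are unwound against the nondegenerate $h'$, the equality is forced. Parts (ii)--(iv) introduce no further ideas beyond matching the two product formulas slot by slot.
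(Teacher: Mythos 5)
Your proof is correct and follows essentially the same route as the paper: part (i) by comparing both sides against the nondegenerate form $h'$ using the defining relations for the cross products, the isometry, and the morphism condition $\alpha'\circ\det(\varphi)=\alpha$, and parts (ii)--(iv) by slot-by-slot matching of the product formulas using $S$-linearity. If anything, your write-up is slightly more careful than the paper's, since you state the conclusion of (i) in its precise form $\varphi(u\times_\alpha v)=\varphi(u)\times_{\alpha'}\varphi(v)$ and make explicit that bijectivity of $\varphi$ is what lets nondegeneracy of $h'$ finish the argument.
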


\begin{proof}
(i) We have
$h'(\varphi(u), \varphi(v\times_\alpha w) )=h(u,v\times_\alpha w )= \alpha(u\wedge v\wedge w)=\alpha'(\varphi(u) \wedge \varphi(v)\wedge \varphi(w))=h'(\varphi(u), \varphi(v)\times_{\alpha'} \varphi(w)).$
Since $h$ is nondegenerate it follows that $\varphi(u\times_{\alpha}v)=u\times_{\alpha'}v$.
\\ (ii)  is trivial.
\\ (iii) It is now easy to see that for $G=G_\varphi$, we have
$$G((a,u)(b,v))=G(ab-\frac{1}{2}(h(u,v)+\overline{h(u,v)}), va+u b+u\times_{\alpha}v)$$
$$= (ab-\frac{1}{2}(h(u,v)+\overline{h(u,v)}), \varphi(v)\varphi(a)+\varphi(u) \varphi( b)+\varphi(u\times_{\alpha}v))$$
and
$$G((a,u))G((b,v))$$
$$=(a,\varphi(u))(b,\varphi(v))=(ab-\frac{1}{2}(h'(\varphi(u),\varphi(v))+\overline{h'(\varphi(u),\varphi(v))}), \varphi(v)a+ \varphi(u) b+u\times_{\alpha'} v)$$
$$=(ab-\frac{1}{2}(h(u,v)+\overline{ h(u,v)}),\varphi(v))), \varphi(v)a+ \varphi(u) b+u\times_{\alpha'}v).$$
The second entries are equal due to our assumption that $\varphi$ is $S$-linear.
\\ (iv)
Analogously, we obtain  for $H=H_\varphi$:
$$H((a,u)(b,v))=H(ab-h(u,v), va+u \bar b+u\times_{\alpha}v)= (ab-h(u,v), \varphi(v)\varphi(a)+\varphi(u) \varphi( \bar b)+\varphi(u\times_{\alpha}v))$$
and
$$H((a,u))H((b,v))=(a,\varphi(u))(b,\varphi(v))=(ab-h'(\varphi(u),\varphi(v)), \varphi(v)a+ \varphi(u) \bar b+u\times_{\alpha'} v)$$
$$=(ab- h(u,v),\varphi(v))), \varphi(v)a+ \varphi(u) \bar b+u\times_{\alpha'}v).$$
\end{proof}

\begin{corollary}\label{cor:4.3}
If  $ \varphi:(P,\alpha)\rightarrow (P',\alpha') $
 is a morphism that is an isometry between $h$ and $h'$, such that ${\rm det}(\varphi)=id$, then
$G_\varphi \in {\rm Aut}({\rm Col}_R(S, P, h,\alpha))$ and $H_\varphi \in{\rm Aut}_R({\rm Cay}(S, P, h,\alpha))$.
\end{corollary}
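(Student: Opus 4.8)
The plan is to recognise the Corollary as the special case of the preceding Proposition in which the target datum $(P',h',\alpha')$ coincides with the source datum $(P,h,\alpha)$, so that the isomorphisms $G_\varphi$ and $H_\varphi$ become endomorphisms, and hence automorphisms, of a single algebra. The only point that requires a moment's thought is why the hypothesis $\det(\varphi)=\mathrm{id}$ puts us exactly in this situation.

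First I would spell out the identification. Taking $P'=P$, $h'=h$ and $\alpha'=\alpha$, the morphism condition $\alpha'\circ\det(\varphi)=\alpha$ for a self-map $\varphi\colon P\to P$ reads $\alpha\circ\det(\varphi)=\alpha$. Since $\det(\varphi)$ is an endomorphism of $\bigwedge^3(P)$ and $\alpha\colon\bigwedge^3(P)\to\langle 1\rangle$ is an isomorphism, it is left-cancellable, so this identity is equivalent to $\det(\varphi)=\mathrm{id}$. Thus the assumption $\det(\varphi)=\mathrm{id}$ is precisely what guarantees that $\varphi\colon(P,\alpha)\to(P,\alpha)$ is a morphism in the required sense. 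Together with the standing hypothesis that $\varphi$ is an isometry between $h$ and $h'=h$, all hypotheses of the preceding Proposition are met, now with source and target data equal.

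Next I would invoke parts (iii) and (iv) of that Proposition verbatim: they yield that $G_\varphi((a,u))=(a,\varphi(u))$ and $H_\varphi((a,u))=(a,\varphi(u))$ are algebra isomorphisms ${\rm Col}(S,P,h,\alpha)\to{\rm Col}(S,P,h,\alpha)$ and ${\rm Cay}(S,P,h,\alpha)\to{\rm Cay}(S,P,h,\alpha)$ respectively. Because source and target coincide, these are automorphisms. Bijectivity is already contained in the Proposition (the maps are isomorphisms), but it can also be read off directly from the bijectivity of $\varphi$, which holds for any morphism of rank-three data as recorded in Section \ref{sec:split}. Finally, $R$-linearity of $G_\varphi$ and $H_\varphi$ is immediate from the $S$-linearity of $\varphi$ together with $R\subseteq S$, whence $G_\varphi\in{\rm Aut}_R({\rm Col}(S,P,h,\alpha))$ and $H_\varphi\in{\rm Aut}_R({\rm Cay}(S,P,h,\alpha))$.

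I do not expect a genuine obstacle here: the entire content is carried by the preceding Proposition, and the work reduces to the bookkeeping observation that $\det(\varphi)=\mathrm{id}$ forces the source and target data to agree. The only place to be careful is to confirm that $\det(\varphi)=\mathrm{id}$, an identity of endomorphisms of the rank-one module $\bigwedge^3(P)$, is the correct reformulation of the morphism condition when $\alpha'=\alpha$; this uses only that $\alpha$ is an isomorphism and hence left-cancellable.
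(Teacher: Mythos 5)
Your proposal is correct and matches the paper's treatment: the paper states this corollary without a separate proof, treating it as the immediate specialization of the preceding Proposition (parts (iii) and (iv)) to the case $P'=P$, $h'=h$, $\alpha'=\alpha$, which is exactly your argument. Your extra observation that $\det(\varphi)=\mathrm{id}$ is equivalent to the morphism condition $\alpha\circ\det(\varphi)=\alpha$ (by left-cancelling the isomorphism $\alpha$) is the same bookkeeping the paper leaves implicit.
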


\begin{theorem}
The maps $W(S, P, h,\alpha)\mapsto {\rm Cay}(S, P, h,\alpha)$ and $W(S, P, h,\alpha)\mapsto {\rm Cay}(S, P, h,\alpha)$ induce bijections between the following sets of algebras:
\\ (i) The set of isomorphism classes of pairs $({\rm Cay}(S, P, h,\alpha), S)$ of octonion algebras over $R$ with subalgebra $S$.
\\ (ii)  The set of isomorphism classes of colour algebras $({\rm Col}(S, P, h,\alpha), S)$ over $R$.
\\ (iii)  The set of isomorphism classes  of vector algebras $W(S, P, h,\alpha)$ over $R$.
\end{theorem}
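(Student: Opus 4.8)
The plan is to factor the asserted triple bijection through the single piece of data common to all three constructions, namely the hermitian data $(S,P,h,\alpha)$, using the vector algebra $W(S,P,h,\alpha)$ as the common source of the two structure maps and then composing. Concretely, I would prove separately that $W\mapsto{\rm Col}$ induces a bijection between (iii) and (ii), and that $W\mapsto{\rm Cay}$ induces a bijection between (iii) and (i); the bijection between (i) and (ii) then follows formally, and the construction maps commute by design.

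First I would treat the correspondence between (ii) and (iii), which is the easy half. In ${\rm Col}(S,P,h,\alpha)=R1\oplus P$ the trace is ${\rm tr}((a,u))=2a$, so the trace-zero submodule is exactly $P$; projecting the colour multiplication onto it returns precisely $(P,\times_\alpha)=W(S,P,h,\alpha)$, i.e.\ $W$ is the vector algebra of the quadratic algebra ${\rm Col}$ in the sense of Section~\ref{sec:1}. Conversely $W$ rebuilds ${\rm Col}$ as $R1\oplus W$ with product $uv=-\frac{1}{2}n(u,v)+u\times_\alpha v$, where the norm $n$ is recovered intrinsically from $W$ through the identity $((u\times v)\times v)\times v=n(v)\,u\times v$ together with Proposition~\ref{prop:2.5}(ii). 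Because the norm of a quadratic algebra is uniquely determined, any unital isomorphism of colour algebras preserves $1$, the norm and hence the trace, so it restricts to an $R$-algebra isomorphism of vector algebras; conversely the Proposition preceding Corollary~\ref{cor:4.3} shows that an isomorphism of vector algebras extends to the colour algebras. This yields the bijection between (ii) and (iii).

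Next I would treat (i) versus (iii). By the classification of Petersson--Racine and Thakur (cf.\ \cite{PR,T}), for $2\in R^\times$ the assignment $(S,P,h,\alpha)\mapsto({\rm Cay}(S,P,h,\alpha),S)$ is a bijection from isometry classes of nondegenerate ternary hermitian data with trivial determinant onto isomorphism classes of pairs (octonion algebra, quadratic \'etale subalgebra); this already gives surjectivity of $W\mapsto{\rm Cay}$. The vector algebra is attached to such a pair canonically: $P$ is the orthogonal complement $S^\perp$ of the subalgebra inside the octonion norm, and $\times_\alpha$ is the projection of the octonion product onto $S^\perp$, exactly as noted for ${\rm Cay}(S,P,h,\alpha)$ in the construction above. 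Hence an isomorphism of pairs restricts to an $R$-algebra isomorphism of vector algebras, matching the construction map and making $W\mapsto{\rm Cay}$ well defined on isomorphism classes.

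The delicate point, which I expect to be the main obstacle, is the \emph{injectivity} of $W\mapsto{\rm Cay}$: I must show that an $R$-algebra isomorphism $f\colon W(S,P,h,\alpha)\to W(S',P',h',\alpha')$ forces an isometry of the full hermitian data, including the \'etale algebra $S$ and the $S$-valued form $h$, and not merely of the $R$-quadratic norm $n$ supplied by Proposition~\ref{prop:2.5}(ii). The key is to express the extra structure intrinsically inside $W$: the identity $u\times_\alpha(u\times_\alpha v)=-h(u,u)v+h(v,u)u$ (see \cite{T,Pu2}) recovers, for generic $u$, the $R$-linear map $v\mapsto h(v,u)u$ into the line $Su$ from the cross product alone, and thereby pins down both the action of $S$ on $P$ (hence $S$ itself, via its faithful image in ${\rm End}_R(P)$) and the hermitian form $h$. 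One then checks that $f$ transports these recovered data, producing an isomorphism $s\colon S\to S'$ and an $s$-semilinear isometry of $(P,h)$ with $(P',h')$ compatible with $\alpha,\alpha'$; by the Proposition preceding Corollary~\ref{cor:4.3} this gives $({\rm Cay},S)\cong({\rm Cay}',S')$ as pairs. Composing the two bijections through the common source (iii) then produces the bijection between (i) and (ii) and completes the proof.
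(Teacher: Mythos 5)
Your proposal is correct in outline, and it does substantially \emph{more} than the paper's own argument, so it is worth recording exactly where the two diverge. The paper's proof is two sentences: given the data $(S,P,h,\alpha)$, the vector algebra $W(S,P,h,\alpha)$ together with $S$ rebuilds ${\rm Cay}(S,P,h,\alpha)$ and ${\rm Col}(S,P,h,\alpha)$, and conversely restricting the multiplication of either algebra to $P$ returns $W(S,P,h,\alpha)$; that is, the constructions are mutually inverse at the level of the defining hermitian data, and the transfer of \emph{isomorphisms} between the three kinds of objects is left entirely implicit (effectively deferred to the field case \cite[Theorem 3.1]{EM} and to the results of Section \ref{sec:iso}). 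Your proposal contains this object-level observation, but in addition supplies the isomorphism-class-level arguments that the statement actually asserts: that a unital isomorphism of colour algebras preserves the (unique) norm of the quadratic algebra and hence restricts to the trace-zero part; that a vector-algebra isomorphism extends to the colour algebras via Proposition \ref{prop:2.5}(ii); surjectivity onto pairs $({\rm Cay},S)$ via \cite{PR,T}; and --- the genuine crux, which the paper never addresses --- injectivity of $W\mapsto({\rm Cay},S)$, i.e.\ that an abstract isomorphism of cross-product algebras forces a semilinear isometry of the full hermitian data including $S$ itself. Your idea for that last step, recovering $h$ and the $S$-action from the identities $((u\times v)\times v)\times v=n(v)\,u\times v$ and $u\times(u\times v)=-h(u,u)v+h(v,u)u$, is the right one and is essentially how the field case is handled.

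Two places in your sketch need tightening before it is a complete proof over a ring $R$. First, ``for generic $u$'' has no meaning over $R$: one must localize at each $P\in{\rm Spec}(R)$, choose $u$ with $n(u)\in R_P^{\times}$ (possible because $n$ is nondegenerate of rank $6$ and $2\in R^{\times}$), recover $S_P$ and $h_P$ there, verify that the recovered local data glue to the global $S$-structure and form, and check that the given isomorphism $f$ transports them compatibly with localization. Second, the Proposition preceding Corollary \ref{cor:4.3} is stated only for $S$-linear maps over a single fixed $S$, whereas your argument produces an $s$-semilinear isometry for some isomorphism $s\colon S\to S'$; you therefore need the (routine) semilinear extension of that proposition, in the sense of the definition of isometry given at the start of Section \ref{sec:iso}. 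Neither point is fatal, and with them filled in your write-up is a more complete account of what the theorem requires than the proof printed in the paper.
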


This generalizes \cite[Theorem 3.1]{EM}.

\begin{proof}
Given a vector algebra $W(S, P, h,\alpha)$ and $S$ we can define $({\rm Cay}(S, P, h,\alpha)), S)$ and $({\rm Col}(S, P, h,\alpha)), S)$.
Conversely, the multiplication of  $({\rm Cay}(S, P, h,\alpha), S)$ and $({\rm Col}(S, P, h,\alpha), S)$ restricted to $P$ yields the algebra
$W(S, P, h,\alpha)$ over $R$.
\end{proof}

Let $C={\rm Col}(S, P,h,\alpha)$, $A={\rm Cay}(S, P,h,\alpha)$ and
$${\rm Aut}_S(A)=\{ f\in {\rm Aut}(A), f|_S=id \},\quad {\rm Der}_S(A)=\{  d\in {\rm Der}(A), d|_S=0\}.$$
Because of the way the multiplications on $A$ and $C$ are defined, it is straightforward to see that  ${\rm Der}(W(S, P,h,\alpha))$ can be embedded into  ${\rm Der}(A)$ and  ${\rm Der}(C)$. Since $d(1)=0$ for every $d\in {\rm Der}(C)$, we have $d|_P\in {\rm Der}(W(S, P,h,\alpha))$. Since
 $d|_S=0$ for every $d\in {\rm Der}_S(A)$, we have $d|_P\in {\rm Der}(W(S, P,h,\alpha))$.
 Thus as over base fields, we obtain:

 \begin{lemma}
 ${\rm Der}({\rm Col}(S, P,h,\alpha))={\rm Der}({\rm Cay}_S(S, P,h,\alpha))={\rm Der}(W(S, P,h,\alpha)).$
 \end{lemma}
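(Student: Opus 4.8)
The plan is to realise all three derivation algebras as one and the same Lie subalgebra of ${\rm End}_R(P)$, via restriction to $P$ and extension by zero. Writing $\rho_C\colon {\rm Der}(C)\to{\rm End}_R(P)$, $D\mapsto D|_P$, and $\rho_A\colon {\rm Der}_S(A)\to{\rm End}_R(P)$, $D\mapsto D|_P$, together with the extension maps $\varepsilon_C,\varepsilon_A$ that send $f\in{\rm Der}(W)$ to the map $(a,u)\mapsto(0,f(u))$ on $C=R\oplus P$ and on $A=S\oplus P$, these are exactly the four maps set up before the statement. Once they are known to be well defined one has $\rho_C\circ\varepsilon_C={\rm id}$ and $\rho_A\circ\varepsilon_A={\rm id}$ immediately; conversely $D(R1)=0$ (resp.\ $D(S)=0$) together with $D(P)\subseteq P$ force $D$ to be recovered from $D|_P$, so $\varepsilon_C\circ\rho_C={\rm id}$ and $\varepsilon_A\circ\rho_A={\rm id}$. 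Thus $\rho_C$ and $\rho_A$ are mutually inverse to $\varepsilon_C,\varepsilon_A$ and identify both ${\rm Der}(C)$ and ${\rm Der}_S(A)$ with the single subalgebra ${\rm Der}(W)\subseteq{\rm End}_R(P)$, which is the asserted equality.

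The real content is the well-definedness of these maps, which reduces to two points: \emph{(1)} every $D\in{\rm Der}(C)$ and every $D\in{\rm Der}_S(A)$ maps $P$ into $P$; and \emph{(2)} every $f\in{\rm Der}(W)$ is skew for the symmetric bilinear form $n(u,v)=h(u,v)+\overline{h(u,v)}$, so that $\varepsilon_C(f)$ is a derivation of $C$, and is moreover $S$-linear and $\can$-skew for $h$, so that $\varepsilon_A(f)$ is an $S$-linear derivation of $A$. For $A$ the inclusion in (1) is the easy case: a derivation of the composition algebra $A$ is skew for the nondegenerate norm $n_A$, and since $S\perp P$ with respect to $n_A$, the hypothesis $D|_S=0$ gives $n_A(D(u),S)=0$ and hence $D(u)\in S^{\perp}=P$. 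For $C$ the corresponding statement is the one genuinely delicate point, because $C$ is not a composition algebra and $P$ is not presented as an orthogonal complement. Here I would use that $C$ is flexible and quadratic: applying $D$ to $x^2=t(x)x-n(x)1$ and using $D(1)=0$ yields $D(x)x+xD(x)=t(x)D(x)$, and decomposing $D(x)=\tfrac12 t(D(x))1+(D(x))_0$ together with $xy+yx=-n(x,y)1$ on $P$ reduces the claim $t(D(x))=0$ to the vanishing of the $R$-linear functional $x\mapsto t(D(x))$ on $P$.

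To finish (1) for $C$ and all of (2) I would pass to residue fields. For each $P_0\in{\rm Spec}(R)$ the base change $D\otimes k(P_0)$ is a derivation of the colour algebra $C(P_0)$ (resp.\ of $A(P_0)$ fixing $S(P_0)$, resp.\ of $W(P_0)$) over the field $k(P_0)$, and over a field each of these assertions is part of the Elduque--Myung picture \cite{EM}: derivations of the colour algebra and of the octonion algebra fixing $S$ annihilate the unit, preserve the decomposition $R1\oplus P$ (resp.\ $S\oplus P$), and restrict on $P$ to the skew derivations of the vector colour algebra, so that all three derivation algebras agree. Since $P$ is finitely generated projective, each required identity---$t(D(u))=0$, $n(f(u),v)+n(u,f(v))=0$, $f(su)=sf(u)$ for $s\in S$, and $h(f(u),v)+h(u,f(v))=0$---is the vanishing of an $R$-multilinear map out of $P$ (and $S$), hence holds over $R$ as soon as it holds after tensoring with every $k(P_0)$. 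Alternatively the skewness in (2) can be obtained directly over $R$ by applying $f$ to the identity $((u\times v)\times v)\times v=n(v)(u\times v)$ and linearising in $v$. The main obstacle is thus concentrated in the field-level input for $C$ and $W$, namely $D(P)\subseteq P$ and the skewness of vector-algebra derivations; everything else---the bookkeeping that the four maps are mutually inverse and the descent from residue fields to $R$---is routine.
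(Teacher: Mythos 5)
Your overall skeleton coincides with the paper's: the paper's proof is precisely the paragraph preceding the lemma, which embeds ${\rm Der}(W(S,P,h,\alpha))$ into ${\rm Der}(C)$ and ${\rm Der}_S(A)$ by extension by zero, and restricts back to $P$ using $d(1)=0$, resp.\ $d|_S=0$ (you also read ${\rm Der}({\rm Cay}_S(S,P,h,\alpha))$ correctly as ${\rm Der}_S(A)$, which is what the notation set up before the lemma intends). Where you go beyond the paper is in isolating the verifications it dismisses as straightforward: that derivations of $C$ and of $A$ (killing $S$) preserve $P$, and that every derivation of $W$ is skew for $n$, is $S$-linear, and is skew for $h$; these are exactly what make the four maps well defined, and your treatment of ${\rm Der}_S(A)$ via norm-skewness and $P=S^{\perp}$ is correct.

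The gap is the descent mechanism you lean on to prove the remaining facts. It is false, over a general $R$ with $\frac{1}{2}\in R$, that an $R$-(multi)linear map out of finitely generated projective modules vanishes as soon as it vanishes after tensoring with every residue field: take $R=k[\epsilon]/(\epsilon^2)$, whose unique prime has residue field $k$, and $\lambda\colon R\to R$, $\lambda(x)=\epsilon x$; then $\lambda\otimes_R k=0$ but $\lambda\neq 0$. (Formation of images does not commute with base change, so no Nakayama argument rescues this; what detects vanishing is localization, i.e.\ the maps $\lambda_{P_0}$ over the local rings $R_{P_0}$.) Since $R_{P_0}$ is a local ring and not a field, the results of \cite{EM} cannot be quoted there either, so your strategy collapses exactly where the lemma has genuine ring-theoretic content. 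The repair is to run your direct arguments after localizing, where $P$ becomes free: your own computation for $C$ yields $t(D(x))\,x=0$ for all $x\in P$, and linearizing and evaluating on a local basis gives $t\circ D|_P=0$; likewise your linearization of $((u\times v)\times v)\times v=n(v)\,(u\times v)$ gives $n(v,f(v))\,(u\times v)=0$, whence $n$-skewness by testing on (sums of) basis vectors, since locally $u\times v$ can be chosen unimodular. What then remains entirely unproved in your proposal is that an arbitrary $R$-linear derivation of $W$ is $S$-linear (and hence $h$-skew), which you need for $\varepsilon_A$ to land in ${\rm Der}_S(A)$: for this you offer nothing but the invalid residue-field descent, so this step is a real hole.
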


 Define the Lie algebras
 $$u(P,h)=\{f\in {\rm End}_S(P)\,|\, h(f(u),v)+h(u,f(v))=0 \text{ for all }u,v\in P\},$$
 $$su(P,h)=\{f\in u(P,h)\,|\, tr_S(f)=0\}.$$
  When $R=F$
  is a field, ${\rm Der}(W(S, P,h,\alpha))=su(P,h)$, when $S=F\times F$ we have hence
 ${\rm Der}(W(S, P,h,\alpha))=sl(3,F)$ \cite[Theorem 3.4]{EM}.

 For $u,w\in P$ define the $S$-linear map $\lambda_{u,v}:P\rightarrow P,$ $x\mapsto h(x,u)v-h(x,v)u.$  Then the $R$-linear span of the maps $\lambda_{u,v}$ with $u,v$ running through all elements in $P$ is contained in $u(P,h)$. When $R$ is a field then $u(P,h)$ equals this $R$-linear span of the maps $\lambda_{u,v}$.

\begin{proposition}
  ${\rm Aut}_S(C)={\rm Aut}(W(S, P,h,\alpha))\cap {\rm End}_S{P}$ and ${\rm Aut}_R(C)={\rm Aut}(W(S, P,h,\alpha))$.
\end{proposition}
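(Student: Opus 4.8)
The plan is to show that restriction to the trace-zero part $P$ yields a bijection ${\rm Aut}_R(C)\to {\rm Aut}(W(S,P,h,\alpha))$, $f\mapsto f|_P$. The second identity is this bijection itself, and the first follows by intersecting both sides with the $S$-linear maps, since an $f\in {\rm Aut}_R(C)$ lies in ${\rm Aut}_S(C)$ precisely when $f|_P\in {\rm End}_S(P)$.

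First I would check that the restriction map is well defined and injective. Any $f\in {\rm Aut}_R(C)$ fixes $1$, and since $C$ is a quadratic algebra its norm $n$ is uniquely determined, so $n\circ f=n$; as $\tfrac12\in R$ this forces $f$ to preserve the polarized form $n(\cdot,\cdot)$ and hence the trace $t(x)=n(1,x)$. Therefore $f$ preserves the decomposition $C=R1\oplus P$ with $P=\ker t$, and because $\times_{\alpha}$ is by definition the projection of the multiplication of $C$ onto $P$, the restriction $f|_P$ lies in ${\rm Aut}(W(S,P,h,\alpha))$. Injectivity is immediate, since $f$ is determined by $f|_{R1}={\rm id}$ and $f|_P$. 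For surjectivity, given $g\in {\rm Aut}(W(S,P,h,\alpha))$ I would set $f(a,u)=(a,g(u))$ and compare $f((a,u)(b,v))$ with $f(a,u)f(b,v)$. Using the $R$-linearity of $g$ and $g(u\times_{\alpha}v)=g(u)\times_{\alpha}g(v)$, the vector components always agree, and the scalar components agree if and only if $g$ preserves the bilinear form $n(u,v)=h(u,v)+\overline{h(u,v)}$. Thus everything reduces to the claim that every $g\in {\rm Aut}(W(S,P,h,\alpha))$ is an isometry of $(P,n)$, which is the main point; this mirrors the norm-preservation argument of Proposition \ref{prop:2.5}(ii).

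To prove the claim I would use the identity $((u\times_{\alpha} v)\times_{\alpha} v)\times_{\alpha} v=n(v)(u\times_{\alpha} v)$ valid in $W$. Applying $g$ and using that $g$ preserves $\times_{\alpha}$ gives $n(g(v))\,(g(u)\times_{\alpha}g(v))=n(v)\,(g(u)\times_{\alpha}g(v))$, so with $c=n(g(v))-n(v)\in R$ and the surjectivity of $g$ we obtain $c\,(w\times_{\alpha}g(v))=0$ for all $w\in P$. Over a field this forces $c=0$ as soon as $g(v)\neq 0$, giving $n\circ g=n$; the obstacle is the ring case, where such pointwise vanishing is delicate because of possible torsion. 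I would resolve it by localizing at each $\mathfrak{p}\in {\rm Spec}(R)$, where $P_{\mathfrak p}$ is free. Pairing the relation against $h(u,\cdot)$ and using $h(u,w\times_{\alpha}g(v))=\alpha(u\wedge w\wedge g(v))$ turns it into $c\,\alpha(u\wedge w\wedge g(v))=0$ for all $u,w$; since $\alpha$ is an $S_{\mathfrak p}$-isomorphism this says $c$ annihilates the $S_{\mathfrak p}$-ideal generated by the coordinates of $g(v)$, and expanding these coordinates over an $R_{\mathfrak p}$-basis of $S_{\mathfrak p}$ shows $c$ annihilates every $R_{\mathfrak p}$-coordinate of $g(v)$. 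When $v$, hence $g(v)$, is unimodular these coordinates generate the unit ideal, so $c=0$.

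It follows that the quadratic forms $n\circ g$ and $n$ agree on all unimodular vectors of $P_{\mathfrak p}$, in particular on the basis vectors $e_i$ and the sums $e_i+e_j$; since a quadratic form is determined by its values there, $n\circ g=n$ over $R_{\mathfrak p}$, and then over $R$ because $R\hookrightarrow\prod_{\mathfrak p}R_{\mathfrak p}$. This proves surjectivity, hence both identities. The genuine difficulty is concentrated in the previous paragraph: over a field the isometry property of $g$ is a one-line consequence of the cross-product identity, whereas over $R$ one must pass to the localizations and reduce the equality of $n\circ g$ and $n$ to agreement on unimodular vectors in order to avoid torsion.
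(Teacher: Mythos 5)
Your proof is correct, and its skeleton is the same as the paper's: the mutually inverse maps $f\mapsto f|_P$ and $g\mapsto G_g$, $G_g((a,u))=(a,g(u))$, identify ${\rm Aut}_R(C)$ with ${\rm Aut}(W(S,P,h,\alpha))$, and the $S$-linear statement follows by intersecting with ${\rm End}_S(P)$. The real difference is in what gets proved. The paper's proof simply asserts, ``similarly as observed in \cite{EM} for base fields'', that every $f\in {\rm Aut}(W(S,P,h,\alpha))\cap{\rm End}_S(P)$ extends to $G_f\in{\rm Aut}_S(C)$; the multiplicativity of $G_f$ --- equivalently, that $f$ preserves the form $n(u,v)=h(u,v)+\overline{h(u,v)}$ --- is left to the field-case reference and to Proposition \ref{prop:2.5}(ii), whose own proof is a one-line appeal to the identity $((u\times v)\times v)\times v=n(v)\,u\times v$. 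You correctly identify this norm preservation as the crux and, unlike the paper, prove it over the ring $R$: since the relation $(n(g(v))-n(v))\,(w\times_\alpha g(v))=0$ admits no cancellation as it would over a field, you localize, translate it via $h(u,w\times_\alpha g(v))=\alpha(u\wedge w\wedge g(v))$ into the statement that $c=n(g(v))-n(v)$ annihilates the coordinate ideal of $g(v)$, deduce $c=0$ for unimodular $v$, and recover $n\circ g=n$ globally because a quadratic form over $R_{\mathfrak p}$ is determined by its values on the basis vectors $e_i$ and on $e_i+e_j$, and because $R$ injects into the product of its localizations. This localization--unimodularity argument is exactly the step separating the field argument of \cite{EM} from the ring setting; your version supplies the detail that the paper's proof (and Proposition \ref{prop:2.5}(ii)) glosses over, at the cost of being longer, while the paper's route buys brevity by outsourcing the key cancellation to the field case.
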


\begin{proof}
Similarly as observed in \cite[p.1301]{EM} for base fields, ${\rm Aut}(W(S, P,h,\alpha))$ can be embedded into  ${\rm Aut}(C)$ via $f\mapsto G_f$ with $G_f((a,u))=(a,f(u))$ for all $a\in R$, $u\in P$.
Put $H={\rm Aut}(W(S, P,h,\alpha))\cap {\rm End}_S{P}$. Then every $f\in H$ can be extended to $G_f\in {\rm Aut}_S(C)$ and we can embed $H$ into ${\rm Aut}_S(C)$ via $f\mapsto G_f$.
Conversely, because of the multiplicative structure of $C$, every $G\in {\rm Aut}_S(C)$ restricts to an automorphism $G|_P$ in ${\rm Aut}(W(S, P,h,\alpha))$ and an endomorphism in ${\rm End}_S{P}$, and every $G\in {\rm Aut}(C)$ restricts to an automorphism $G|_P$ in ${\rm Aut}(W(S, P,h,\alpha))$.
This implies that  ${\rm Aut}_S(C)=H$  and that ${\rm Aut}(C)={\rm Aut}(W(S, P,h,\alpha))$.
\end{proof}

Define the unitary group with respect to $h$,
$$U(P,h)=\{ \varphi\in {\rm GL}(P,S)\,|\, h(\varphi(u),\varphi(v))=h(u,v) \text{ for all }u,v\in P\},$$
as the set of isometries of $h$ and the special unitary group with respect to $h$ as
$$SU(P,h)=\{ \varphi\in U(P,h)\,|\, \det (\varphi)=1\}.$$
 By Corollary \ref{cor:4.3},
 $$SU(P,h)\subset {\rm Aut}({\rm Col}(S, P, h,\alpha)) \text{ and } SU(P,h)\subset{\rm Aut}({\rm Cay}(S, P, h,\alpha)).$$
When $R$ is a field then ${\rm Aut}_S(C)=SU(P,h)$.

\section{Natural examples of noncommutative Jordan algebras with big radicals} \label{sec:Jordan}

  The theory of  composition algebras over schemes was  launched by Petersson \cite{P2} and later extended to large classes of Jordan algebras, e.g. \cite{Ach1, Ach2, PST, PST2, Pu1}.
  We refer the reader to \cite{P2} or later work for the details on how to transfer the language of nonassociative algebras over base rings to algebras over base schemes (and vice versa). Our goal is to point out how much richer the theory of  colour algebras becomes when we study them over schemes, adjusting the approach of \cite[3.8]{P2} in the following. To avoid pathological cases, we only look at algebras over schemes that are defined over locally free $\mathcal{O}_X$-modules of constant rank 7 and have full support in the sense of \cite{P2}. A \emph{colour algebra} over $X$ is then defined as a unital algebra over $X$ that is a colour algebra for all $P\in X$.

Let $S=R[t_0,\dots,t_n]$ be  equipped with the well-known canonical grading $S=\oplus_{m\geq 0}S_m$, where ${\rm rank}\, S_m=\binom{m+n}{n}$. Let $X={\rm Proj}\,S=\mathbb{P}_R^n$.
For all $m\in \mathbb{Z}$, $\mathcal{O}_X(m)$ is a locally free $\mathcal{O}_X$-module of rank one, and the global sections satisfy
$$H^0(X,\mathcal{O}_X(m))=S_m \text{ for }m\geq 0,\quad H^0(X,\mathcal{O}_X(m))=0 \text{ for }m< 0.$$
For all $m,l\in \mathbb{Z}$, there exists an isomorphism
$$\alpha:{\rm det}(\mathcal{O}_X(l)\oplus \mathcal{O}_X(m)\oplus \mathcal{O}_X(-l-m))\rightarrow \mathcal{O}_X.$$
For all positive integers  $l,m$, we define  the split colour algebra
$$\mathcal{C}_{l,m}={\rm Col}(\mathcal{O}_X\times \mathcal{O}_X, \mathcal{O}_X(l)\oplus \mathcal{O}_X(m)
\oplus \mathcal{O}_X(-l-m),\alpha),$$
$$
\left [\begin {array}{cc}
 a & u\\
\check{u}  &  a \\
\end {array}\right ]
\left [\begin {array}{cc}
 b & v\\
\check{v}  &  b \\
\end {array}\right ]
=
\left [\begin {array}{cc}
  ab+\langle u,\check{v}\rangle + \langle v,\check{u}\rangle & av+bu-\check{u}\times \check{v}\\
b\check{u} +a\check{v}+u\times v &  ab+\langle u,\check{v}\rangle + \langle v,\check{u}\rangle \\
\end {array}\right ].
$$
 over $X$. The colour algebra $\mathcal{C}_{l,m}$ is a quadratic algebra  with norm
 $$
n(
\left [\begin {array}{cc}
 a & u\\
\check{u}  &  a \\
\end {array}\right ])=
a^2-\langle u,\check{u}\rangle.$$
The localizations of the algebras $\mathcal{C}_{l,m}$ at $P\in X$ are split colour algebras over $\mathcal{O}_{P,X}$, and the algebras ${(\mathcal{C}_{l,m})}_P\otimes_{\mathcal{O}_{P,X}} k(P)\cong {\rm Zor}(k(P))$ are split colour algebras over the residue class fields $k(P)$, for all $P\in X$.

 The global sections $H^0(X, \mathcal{C}_{l,m})$ of a split colour algebra
$\mathcal{C}_{l,m}$ provide us with canonical examples of flexible quadratic algebras $ C_{l,m}=H^0(X, \mathcal{C}_{l,m})$ over $R=H^0(X, \mathcal{O}_X)$: the algebra
\[ C_{l,m}=
\left [\begin {array}{cc}
R&S_l\oplus S_m\\
S_{l+m}&R\\
\end {array}\right ]
\]
is defined on the free module $R^s$ with
$$s=1+\binom{l+n}{n}+\binom{m+n}{n}+\binom{(l+m)+n}{n}$$
 and is an $R$-subalgebra  of $ {\rm Col}(S)$, hence is a noncommutative Jordan $R$-subalgebra of $ {\rm Col}(S)$. For instance, if $n=1$ then $s=
5+2(l+m)$
is always odd. Its multiplication is defined via
\[
\left [\begin {array}{cc}
 a & f_l\oplus f_m\\
f_{l+m}  &  a \\
\end {array}\right ]
\left [\begin {array}{cc}
 b & g_l\oplus g_m\\
g_{l+m}  &  b \\
\end {array}\right ]
=
\left [\begin {array}{cc}
 ab & (ag_l+bf_l)\oplus (ag_m+bf_m)\\
bf_{l+m}+ag_{l+m}+f_lg_m-f_mg_l &  ab \\
\end {array}\right ]
\]
for $a,b\in R$, where the $f$'s and $g$'s are homogeneous polynomials in $S$ with subscripts
indicating their degrees.
Note that here the terms corresponding to $\langle u,\check{v}\rangle $ and $\langle v,\check{u}\rangle$ vanish, $u\times v$ corresponds to $(f_l\oplus f_m)\otimes ( g_l\oplus g_m)=f_l g_m-f_m g_l$
and $\check{u}\times \check{v}$ corresponds to $f_{l+m} \times g_{l+m} =0$ for all elements $f,g$.

Let $n$ be the norm of $\mathcal{C}_{l,m}$, and let $n_0=n(X):H^0(X, \mathcal{C}_{l,m})\to H^0(X, \mathcal{O}_X)$.
 If $R$ is a field then
 $$
n_0(
\left [\begin {array}{cc}
 a & u\\
\check{u}  &  a \\
\end {array}\right ])=
a^2$$
is  degenerate, and  $C_{l,m}$ has the radical
\[{\rm rad}\,C_{l,m}= {\rm rad}\,n_0=
\left [\begin {array}{cc}
0&S_l\oplus S_m\\
\noalign{\smallskip}
S_{l+m}&0\\
\end {array}\right ]
\]
 of dimension
$\binom{l+n}{n}+\binom{m+n}{n}+\binom{(l+m)+n}{n}$,
analogously as observed in \cite[3.8]{P2}.


\begin{thebibliography}{1}


\bibitem{Ach1} Achhammer, G.,  {\it Albert Algebren \"{u}ber lokal geringten R\"{a}umen}. PhD Thesis, FernUniversit\"{a}t
Hagen, 1995.

\bibitem{Ach2} Achhammer, G.,  {\it The first Tits construction of Albert algebras over locally ringed spaces.}
In: Nonassociative algebra and its applications (Oviedo, 1993), Math. Appl. 303
 (1994), 8-11.  Kluwer Acad. Publ., Dordrecht.


\bibitem{DKD} Domokos, G., Koevesi-Domokos, S., \emph{The algebra of colour.} J. Math. Physics 19 (1978), 1477--1481.

\bibitem{E1} Elduque, A., {\it On the colour algebra}. Algebras, Groups, Geom. 5 (4) (1988) 395-–410.

\bibitem{EM}  Elduque, A., Myung, H. C., \emph{Color algebras and affine connections on $S^6$}. J. Algebra 149 (1) (1992), 234--261.

\bibitem{E} Elduque, A., {\it Colour algebras and Cayley-Dickson algebras}.
Proc. of the Royal Soc. of Edinburgh, 125A (1995), 1287--1303.

 \bibitem{KPS} Sridharan, R., Knus, M.-A., and Parimala, R. \emph{On compositions and triality.} Journal f\"ur die Reine und Angewandte Mathematik 457 (1994), 45--70.

\bibitem{M1} McCrimmon, K., {\it Nonassociative algebras with scalar
involution}, Pacific J. of Math. 116 (1) (1985), 85--108.

\bibitem{PST} Parimala, R., Suresh, V., Thakur, M. L., {\it Jordan algebras and $F_4$ bundles over the affine
plane}. J. Algebra 198 (1997), 582--607.

\bibitem{PST2} Parimala, R., Sridharan, R., Thakur, M. L., {\it Tits' constructions of Jordan algebras and $F_4$
 bundles on the plane}. Compositio Mathematica 119 (1999), 13--40, 395--410.

\bibitem{P2} Petersson, H. P., {\it Composition algebras over algebraic curves
of genus 0}, Trans. Amer. Math. Soc. 337 (1993), 473--491.

\bibitem{PR} Petersson, H. P., Racine, M., {\it Reduced models of Albert algebras},
 Math. Z. 223 (3) (1996), 367--385.

\bibitem{Pu1} Pumpl\"un, S., {\it Albert algebras over curves of genus zero and one.}
 J. Algebra 320 (2008), 4178--4214.

\bibitem{Pu2} Pumpl\"un, S., {\it On flexible quadratic algebras}. Acta Math. Hungar. 119 (4) (2008), 323--332.

\bibitem{Sch1} Schafer, R. D., \emph{A generalization of the algebra of colour I.} J. Algebra 160 (1993), 93--129.

\bibitem{Sch2} Schafer, R. D., \emph{A generalization of the algebra of colour II.} J. Algebra 166 (1994), 296--309.

\bibitem{Sch3} Schafer, R. D., \emph{Simple noncommutative Jordan algebras satisfying $([x,y],y,y)=0$.} J. Algebra 169 (1994), 194--199.

\bibitem{T} Thakur, M. L., {\it Cayley algebra bundles on $\mathbb{A}_K^2 $ revisited}, Comm. Algebra
23(13) (1995), 5119--5130.

\bibitem{W} Wene, G. P., \emph{A little color in abstract algebra.}
The American Mathematical Monthly 89 (6) (1982), 417-419.

\end{thebibliography}
\end{document}